\theoremstyle{plain}
\newtheorem{theorem}{Theorem}[section]
\newtheorem{proposition}[theorem]{Proposition}
\newtheorem{cor}[theorem]{Corollary}
\newtheorem{lemma}[theorem]{Lemma}
\theoremstyle{definition}
\newtheorem{definition}[theorem]{Definition}
\theoremstyle{remark}
\numberwithin{equation}{section}
\newcommand{\Sym}{\mathrm{Sym}^{2}}
\newcommand{\Riem}{\mathrm{Rm}}
\newcommand{\Ric}{\mathrm{Ric}}
\renewcommand{\div}{\mathrm{div}}
\begin{document}
\title{On the spectrum of the Page and the Chen-LeBrun-Weber metrics}
\begin{abstract}
We give bounds on the first non-zero eigenvalue of the scalar Laplacian for both the Page and the Chen-LeBrun-Weber Einstein metrics. One notable feature is that these bounds are obtained without explicit knowledge of the metrics or numerical approximation to them. Our method also allows the estimation of the invariant part of the spectrum for both metrics. We go on to discuss an application of these bounds to the linear stability of the metrics. We also give numerical evidence to suggest that the bounds for both metrics are extremely close to the actual eigenvalue. 
\end{abstract}
\author{Stuart J. Hall}

\address{Department of Applied Computing, University of Buckingham, Hunter St., Buckingham, MK18 1G, U.K.} 
\email{stuart.hall@buckingham.ac.uk}

\author{Thomas Murphy }
\address{D\'{e}partment de Math\'{e}matique,
Universit\'{e} Libre de Bruxelles,
 \ Boulevard du Triomphe,
B-1050 Bruxelles,
Belgique.}
\curraddr{Department of Mathematics, McMaster University, 1280 Main St. W., Hamilton ON, Canada.}
\email{tmurphy@math.mcmaster.ca}

\maketitle
\section{Introduction}
\subsection{Main results}
The purpose of this paper is to provide some estimates for the first non-zero eigenvalue of the scalar Laplacian of two distinguished Einstein metrics.  The metrics we are interested in are the Page metric \cite{Pa} on $\mathbb{CP}^{2}\sharp\overline{\mathbb{CP}}^{2}$ 
and the Chen-LeBrun-Weber metric \cite{CLW} on $\mathbb{CP}^{2}\sharp2\overline{\mathbb{CP}}^{2}$. The main result we prove is:
\begin{theorem}\label{main}
Let $g_{P}$ denote the Page metric on $\mathbb{CP}\sharp\overline{\mathbb{CP}}^{2}$ with
$$\Ric(g_{P}) = \Lambda g_{P}, \ \Lambda>0 .$$
Then the first non-zero eigenvalue of the Laplacian on functions $\lambda^{P}_{1}$ satisfies
$$\frac{4}{3}\Lambda<\lambda^{P}_{1} \leq 1.89\Lambda.$$
Let $g_{CLW}$ denote the Chen-LeBrun-Weber metric on $\mathbb{CP}\sharp2\overline{\mathbb{CP}}^{2}$ with
$$\Ric(g_{CLW}) = \Lambda g_{CLW}, \ \Lambda>0 .$$
Then the first non-zero eigenvalue of the Laplacian on functions $\lambda^{CLW}_{1}$ satisfies
$$\frac{4}{3}\Lambda<\lambda^{CLW}_{1} \leq 2.11\Lambda.$$   
\end{theorem}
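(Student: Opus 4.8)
The two inequalities call for completely different tools: a pointwise curvature argument for the lower bound and the variational principle for the upper bound. For the lower bound I would invoke the Lichnerowicz--Obata theorem. Both metrics live in dimension $n=4$ and satisfy $\Ric=\Lambda g$, so in particular $\Ric \geq (n-1)\tfrac{\Lambda}{3}g$; the quickest route is to integrate the Bochner formula against a first eigenfunction $u$, which on a closed Einstein manifold gives
\begin{equation*}
\int_M (\Delta u)^2\, dV = \int_M \abs{\Hess u}^2\, dV + \Lambda\int_M \abs{\nabla u}^2\, dV .
\end{equation*}
Combining $\int_M \abs{\Hess u}^2 \geq \tfrac1n \int_M (\Delta u)^2$ with $\int_M \abs{\nabla u}^2 = \lambda_1 \int_M u^2$ yields $\lambda_1\bigl(1-\tfrac1n\bigr)\geq \Lambda$, i.e. $\lambda_1 \geq \tfrac43\Lambda$. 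To obtain the \emph{strict} inequality I would appeal to Obata's rigidity: equality would force $(M,g)$ to be the round $S^4$, which is impossible since the underlying spaces are $\mathbb{CP}^2\sharp\overline{\mathbb{CP}}^2$ and $\mathbb{CP}^2\sharp2\overline{\mathbb{CP}}^2$. Hence $\tfrac43\Lambda<\lambda_1$ in both cases.

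For the upper bound I would use the Rayleigh--Ritz characterisation, so that any single mean-zero test function $u$ supplies the bound $\lambda_1 \leq \int_M\abs{\nabla u}^2\,dV \big/ \int_M u^2\,dV$. The crucial structural input is that both metrics have large isometry groups: the Page metric is of cohomogeneity one under an isometric $U(2)$-action, and the Chen--LeBrun--Weber metric is toric. I would therefore look for test functions pulled back from the orbit space (or lying in a single low isotypic component), which collapses the Rayleigh quotient to a weighted one-dimensional problem for the Page metric,
\begin{equation*}
\frac{\int_0^L \psi'(t)^2\, J(t)\, dt}{\int_0^L \psi(t)^2\, J(t)\, dt},
\end{equation*}
with $t$ parametrising the interval orbit space, $J(t)$ the orbit volume, and Neumann conditions at the collapsing endpoints; for the Chen--LeBrun--Weber metric the analogous reduction lands on the two-dimensional moment polytope with the Duistermaat--Heckman weight.

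The whole point, and the step I expect to be the main obstacle, is to pin down these weights \emph{without} solving for the metric. The Einstein equation $\Ric=\Lambda g$ turns into a system of ODEs for the orbit coefficients, and the smoothness (Delzant-type) conditions where the orbits degenerate fix the boundary data; in symplectic coordinates the invariant measure becomes standard and the entire geometry is carried by a single momentum profile satisfying a known second-order equation that vanishes at the endpoints. The integrals in the Rayleigh quotient then reduce to moments of this profile, which can be evaluated---or at least sharply estimated---directly from the ODE, its boundary behaviour, and the topological normalisations of $\vol$ and $\Lambda$. With the coefficients controlled I would finish by a Rayleigh--Ritz computation over a small family of explicit trial functions (low-degree polynomials in the moment coordinate, made orthogonal to the constants), optimising to drive the quotient below $1.89\Lambda$ and $2.11\Lambda$ respectively; feeding the full trial space into the same Sturm--Liouville operator simultaneously produces the estimate of the invariant part of the spectrum. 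The genuinely delicate part is the quantitative control of the momentum profile from the Einstein ODE: one must extract enough about it to evaluate the moments to the precision demanded by the constants $1.89$ and $2.11$, and to certify that the chosen isotypic component really does contain a function with Rayleigh quotient this small.
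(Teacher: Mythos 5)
Your lower bound is fine and matches the paper, which simply quotes the Lichnerowicz--Obata theorem; the Obata rigidity argument for strictness is the right way to rule out equality. The gap is in the upper bound, and it is exactly the step you flag as ``the main obstacle'': for the Chen--LeBrun--Weber metric your proposed reduction cannot be carried out. The CLW Einstein metric is only torus-invariant (cohomogeneity two), its existence proof is non-constructive, and the Rayleigh quotient $\int_M\abs{\nabla_e u}^2\,dV_e$ in moment coordinates involves the Hessian of a symplectic potential that is the unknown solution of a fourth-order PDE (Abreu's equation). There is no ``single momentum profile satisfying a known second-order equation'' from which the required moments can be extracted, so your plan of evaluating the weights directly from the Einstein equations stalls precisely where you predicted. (For the Page metric alone your cohomogeneity-one scheme does work --- the paper carries it out numerically in its final section --- but it requires the explicit ODE solution and does not extend to CLW.)

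The missing idea is the conformally K\"ahler structure. By Derdzinski/LeBrun, each Einstein metric satisfies $g_{e}=s_{k}^{-2}g_{k}$ for an \emph{extremal toric K\"ahler} metric $g_{k}$ with scalar curvature $s_{k}$. Two facts then make everything computable without knowing either metric: (i) for a toric K\"ahler surface the volume form in moment/angle coordinates is standard Lebesgue measure and, for an extremal metric, $s_{k}$ is an \emph{affine} function on the moment polytope, determined by the polytope alone; (ii) choosing test functions that are powers of $s_{k}$ and using the conformal transformation law of scalar curvature, one proves an integration-by-parts identity (Proposition \ref{Sprop} of the paper) expressing $\int_M\langle\nabla_{e}s_{k}^{p},\nabla_{e}s_{k}^{q}\rangle\,dV_{e}$ as an integral of a rational function of $s_{k}$ against $dV_{k}$, hence as an explicit one-variable integral over the polytope. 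The constant $\Lambda$ is likewise pinned down topologically via Gauss--Bonnet and the signature formula. The specific test function is $s_{k}^{-1}$, not a polynomial in the moment coordinate: its gradient with respect to $g_{e}$ equals $-\nabla_{k}s_{k}$, which is holomorphic, mirroring the equality case of Matsushima's theorem; the paper notes that $s_{k}$ itself gives only $1.968\Lambda$ and $2.231\Lambda$. Without identity (ii) you cannot evaluate the Rayleigh quotient of \emph{any} test function on the CLW manifold, so the proposal as written does not close.
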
  
The lower bound of $4\Lambda/3$ in Theorem \ref{main} is just the classical Lichnerowicz-Obata lower bound \cite{Bes}. The main contribution of this paper is the upper bound for the first eigenvalue. Motivations for this sort of result come from at least two sources. Firstly, $\lambda_{1}$ is an important quantity to many physicists. For example, it controls  the rate of convergence of heat flow on the manifold. One of the main applications of numerical approximations to Einstein metrics has been to calculate $\lambda_{1}$ \cite{Braun, Donum, DoranC}.\\ 

Secondly, such bounds are useful in the study of the Ricci flow and can be used to determine whether an Einstein metric is linearly stable as a fixed point of the flow (we refer the reader to section \ref{LinStab} for details). The investigation of linear stability was instigated by Cao, Hamilton and Ilmanen \cite{CHI}. They noted that if  the first non-zero eigenvalue $\lambda_{1}$ of the scalar Laplacian satisfies
\begin{equation}\label{CHIBound}
\lambda_{1} < 2 \Lambda
\end{equation}
then the Einstein metric $g$ is linearly unstable and can be destabilised by conformal perturbations. They raised as an open question the existence of any Einstein metric satisfying the bound (\ref{CHIBound}).  Theorem \ref{main} answers this question in the affirmative and gives the following corollary:
\begin{cor}
The Page metric is linearly unstable and can be destabilised by conformal perturbations.
\end{cor}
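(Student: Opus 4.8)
The plan is to deduce this immediately from the upper bound established in Theorem \ref{main} together with the Cao--Hamilton--Ilmanen instability criterion (\ref{CHIBound}). The entire content of the corollary is already packaged into those two ingredients, so no additional geometric analysis is required; the only task is to verify that the numerical upper bound of Theorem \ref{main} genuinely falls below the threshold $2\Lambda$ appearing in (\ref{CHIBound}).

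First I would recall from Theorem \ref{main} that the first non-zero eigenvalue of the scalar Laplacian for the Page metric satisfies $\lambda^{P}_{1} \leq 1.89\,\Lambda$. Since $1.89 < 2$, this yields $\lambda^{P}_{1} < 2\Lambda$, which is precisely the hypothesis (\ref{CHIBound}). Invoking the result of Cao, Hamilton and Ilmanen \cite{CHI}, the strict inequality $\lambda^{P}_{1} < 2\Lambda$ forces the Page metric to be linearly unstable, the destabilising direction being a conformal perturbation constructed from the corresponding first eigenfunction.

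The one point meriting a word of care is that the instability criterion requires the \emph{strict} inequality $\lambda_{1} < 2\Lambda$ rather than $\lambda_{1} \leq 2\Lambda$; the upper bound $1.89\,\Lambda$ satisfies this comfortably, so the margin is not delicate and no borderline analysis is needed. Beyond this there is no genuine obstacle in the corollary itself: all of the real difficulty lives upstream in establishing the quantitative upper bound of Theorem \ref{main}, which is exactly what makes the conformal instability detectable without explicit knowledge of the metric.
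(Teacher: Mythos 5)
Your proposal is correct and follows exactly the paper's route: the corollary is an immediate consequence of the upper bound $\lambda_{1}^{P}\leq 1.89\Lambda<2\Lambda$ from Theorem \ref{main} combined with the Cao--Hamilton--Ilmanen criterion (\ref{CHIBound}), with the destabilising direction coming from the first eigenfunction (made precise in the paper via the $S$-operator of Section \ref{LinStab}). No gaps.
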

The instability of the Page metric has been known for nearly thirty years due to the work of Young \cite{You} (though it seems that the mathematical community was not aware of her work until recently).  In the recent paper \cite{HHS} the first author, Robert Haslhofer and Michael Siepmann gave an alternative proof of the instability of the Page metric based on the presence of many ($>1$) harmonic 2-forms on this manifold.  There the Bunch-Donaldson numerical approximation to the Chen-LeBrun-Weber metric \cite{BunDon} was used to give strong evidence that the Chen-LeBrun-Weber metric is also unstable.\\

Our methods do not need any numerical approximations to the metrics but unfortunately the bound $2.11$ is tantalisingly just above the magic number $2$ that is needed to show instability.  In section \ref{Disc} we give some heuristic reasoning as to why one might expect this bound to be very close to optimal.  This is reinforced by the numerics in section \ref{NumRes} which suggest that $1.89\Lambda$ is very close to the exact value of $\lambda_{1}^{P}$ and $2.11\Lambda$ is close to $\lambda_{1}^{CLW}$ (assuming that the first non-zero eigenvalue lies in the invariant part of the spectrum). 
\subsection{Structure of the paper}
The method of proving Theorem \ref{main} is extremely simple. We use the characterisation of the first non-zero eigenvalue $\lambda_{1}$ given by the Rayleigh quotient
$$\lambda_{1} = \inf \left\{\frac{\|\nabla f\|_{L^{2}}^{2}}{\|f\|^{2}_{L^{2}}} : f \in C_{0}(M)\right\},$$
 where $C_{0}(M)$ is the space of all functions with integral $0$. Hence evaluating the quotient on any test function (normalised to have integral 0) gives an upper bound for the eigenvalue. Of course the problem with doing this, especially  for the CLW metric, is that one needs to know the metric as well as the volume form in order to evaluate the term $\|\nabla f\|_{L^{2}}$ .\\ 

In section \ref{RalQuo} we explain how the Page and CLW metrics are conformal to a K\"ahler metric. We show how this can be used to simplify the calculation of the Rayleigh quotient. In section \ref{TorKal} we use the fact that the K\"ahler metrics are toric-K\"ahler to explicitly evaluate the integrals given in section \ref{RalQuo}, thus proving the main theorem. In section \ref{Disc} we examine the bounds and explain how they relate to the classical Lichnerowicz-Matsushima bound. In section \ref{LinStab} we give more details on the relationship between the spectrum of the scalar Laplacian and the notion of linear stability. Finally, in section \ref{NumRes} we investigate the spectrum using a more general Rayleigh-Ritz method.  This involves finding a suitable set of test functions $$T_{N} = \{\psi_{1}, \psi_{2}, ...,\psi_{N}\}$$ and then computing the $N \times N$ matrices
$$A_{ij} = \langle \nabla \psi_{i} ,\nabla \psi_{j} \rangle_{L^{2}} \textrm{ and } B = \langle \psi_{i} ,\psi_{j} \rangle_{L^{2}}.$$ 
One then hopes that the eigenvalues of $B^{-1}A$ will converge to the eigenvalues of $\Delta$.  As we pick very symmetric test functions, we may only be able to compute the symmetric part of the spectrum which can be strictly smaller than the whole spectrum \cite{AbrFr}. Where there is convergence, one is able to find the corresponding eigenfunctions expanded in terms of the test functions.
\subsection{Notation and conventions}
We will use the convention that the Laplacian has non-negative eigenvalues. We will show that the calculation of the Rayleigh quotients we use could be written as a functions of a single variable $a$ (which determines the critical K\"ahler class in each case). The value of $a$ can be approximated to any order as it is the root of a polynomial. Where appropriate, will we give values to 4 significant figures.\\
\\
\emph{Acknowledgements:}
SH would like to thank his doctoral advisor Simon Donaldson for introducing him to many of the ideas we have used in this paper. We would like to thank Robert Haslhofer for his interest and comments on a previous version of this paper and the anonymous referees for numerous suggestions for improvements. We would also like to thank Steve Zelditch for his assistance.
TM is supported by an A.R.C. grant.
We acknowledge the support of a Dennison research grant from the University of Buckingham which funded a research visit by TM.

\section{Simplifying the Rayleigh quotient}\label{RalQuo}
The purpose of this section is to exploit some basic facts about conformally K\"ahler, Einstein $4$-manifolds in order to reduce the calculation of the Rayleigh quotient to integrals involving only rational functions of the scalar curvature of the K\"ahler manifold.\\
\\
The Page metric on $\mathbb{CP}^{2}\sharp\overline{\mathbb{CP}}^{2}$ has a cohomogeneity one action by $U(2)$ which reduces the Einstein equation to a non-linear system of ODEs which can be solved explicitly. Hence, in theory, one could compute arbitrarily many eigenvalues using a Rayleigh-Ritz method (see section \ref{NumRes} for this approach). Unfortunately the CLW metric on $\mathbb{CP}^{2}\sharp2\overline{\mathbb{CP}}^{2}$ only admits a cohomogeneity two action by a torus $\mathbb{T}^{2}$ and so the Einstein equation is given by a non-linear system of PDEs.  The existence proof given by the authors in \cite{CLW} is non-constructive, making obtaining information about the geometry of the metric extremely difficult.  The main reason we can make progress is a wonderful feature both metrics share. This is a link with K\"ahler geometry that was first noticed by Derdzinski \cite{Derd}.  We recall that an extremal K\"ahler metric is one where the gradient of the scalar curvature is a real holomorphic vector 
field.

\begin{proposition}[Derdzinski]\label{Derd}
Let $(M^{4},h)$ be a connected oriented Einstein manifold such that $W^{+}$ has at most 2 distinct eigenvalues at each point. Then either $W^{+}\equiv 0$, or else $W^{+}$ has exactly 2 eigenvalues at each point. In the latter case, moreover, the conformally related metric $g={(24)}^{1/3}|W^{+}|^{2/3}h$ is locally conformally K\"ahler. The scalar curvature $s$ of $g$ is then nowhere zero and $h=s^{-2}g$. Furthermore, the metric $g$ is an extremal K\"ahler metric. 
\end{proposition}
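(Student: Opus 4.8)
The plan is to follow the classical four-dimensional curvature decomposition. First I would record the pointwise algebra: in dimension four $W^{+}$ acts as a trace-free symmetric endomorphism of the rank-three bundle $\Lambda^{+}$ of self-dual two-forms, so at each point its three eigenvalues are real and sum to zero. The hypothesis that at most two are distinct then leaves exactly two cases: a triple eigenvalue, which must be $0$ and hence forces $W^{+}=0$ at that point, or a double eigenvalue $\lambda$ together with a simple eigenvalue $-2\lambda$ with $\lambda\neq 0$. Thus the zero locus $Z=\{W^{+}=0\}$ is precisely the set where the three eigenvalues coincide, and the first alternative of the proposition reduces to showing $Z=M$ or $Z=\varnothing$.

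Second, I would use the Einstein condition to control $Z$. The second Bianchi identity gives $\delta W^{+}=0$, so $W^{+}$ is harmonic and satisfies an elliptic Weitzenböck identity of the form $\nabla^{*}\nabla W^{+}=\tfrac{s}{2}W^{+}-6\,(W^{+})^{2}_{0}$, with a trace-free quadratic nonlinearity. Since Einstein metrics are real-analytic in harmonic coordinates (DeTurck--Kazdan), $W^{+}$ is a real-analytic solution of this system, so it either vanishes identically or has nowhere-dense zero set; a maximum-principle argument applied to the scalar equation governing the simple eigenvalue $-2\lambda$ (whose square controls $\abs{W^{+}}^{2}=6\lambda^{2}$) then removes $Z$ entirely. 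As $M$ is connected, this yields $Z=\varnothing$ whenever $W^{+}\not\equiv 0$, i.e. exactly two eigenvalues at every point.

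Third --- and this is where the real work lies --- I would construct the Kähler structure on the set where $W^{+}\neq 0$. The simple eigenvalue has a one-dimensional eigenspace in $\Lambda^{+}$, spanned (locally, up to sign) by a unit self-dual two-form $\omega$, which defines an almost-Hermitian structure $J$. Writing $W^{+}=\lambda(\mathrm{Id}-3\Pi_{\omega})$ and feeding this into $\delta W^{+}=0$ produces first-order equations coupling $\nabla\omega$ to $d\lambda$, and the content of the proposition is that there is a unique conformal weight for which the rescaled two-form is closed, making $J$ integrable and $g$ Kähler. The exponent $2/3$ and the constant $(24)^{1/3}$ are then pinned down by the universal identity $\abs{W^{+}}_{g}=\abs{s}/\sqrt{24}$ valid on any Kähler surface, together with the conformal scaling $\abs{W^{+}}_{g}=s^{-2}\abs{W^{+}}_{h}$ of the endomorphism norm; combining these gives $\abs{W^{+}}_{h}=\abs{s}^{3}/\sqrt{24}$, hence $g=s^{2}h=(24)^{1/3}\abs{W^{+}}^{2/3}h$, so simultaneously $h=s^{-2}g$ and $s\neq 0$ (the latter exactly because $\abs{W^{+}}\neq 0$ everywhere). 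I expect the main obstacle to be verifying closedness of the rescaled eigenform from the harmonicity of $W^{+}$, since this is the one step that genuinely uses the Einstein equation rather than pointwise linear algebra.

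Finally, I would upgrade ``Kähler'' to ``extremal Kähler''. Rewriting the Einstein condition $\Ric(h)=\Lambda h$ for $h=s^{-2}g$ in terms of the Kähler data of $g$ and the function $s$, and applying the standard conformal transformation law for the Ricci tensor, the Einstein equation becomes equivalent to the statement that the $(1,0)$-part of $\nabla s$ is a holomorphic vector field --- that is, $g$ is extremal in the sense of Calabi. This is a direct, if lengthy, tensorial computation on a Kähler surface, and it completes the proof.
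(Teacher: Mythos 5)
First, a point of order: the paper contains no proof of this proposition. It is quoted, with attribution, from Derdzinski \cite{Derd} (see also Besse \cite{Bes}), so there is no in-paper argument to measure your proposal against; I can only assess the sketch on its own terms. The skeleton you describe is the right one: the pointwise linear algebra of a trace-free symmetric endomorphism of $\Lambda^{+}$ with a repeated eigenvalue; the fact that the Einstein condition enters only through $\delta W^{+}=0$ (second Bianchi); the Kähler-surface identity $|W^{+}|_{g}=|s|/\sqrt{24}$ combined with the conformal weight $|W^{+}|_{g}=s^{-2}|W^{+}|_{h}$ to pin down the exponent $2/3$ and the constant $(24)^{1/3}$; and the conformal transformation of the trace-free Ricci tensor, whose $J$-anti-invariant part forces $(\nabla^{2}s)^{(2,0)+(0,2)}=0$, i.e.\ extremality. (One correction there: this last step is an implication, not the ``equivalence'' you assert --- extremality uses only the $J$-anti-invariant part of the Einstein equation, which says strictly more.)

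That said, the proposal has two genuine gaps. The central one you flag but do not close: proving that the simple eigenform $\omega$, rescaled by the appropriate power of $|W^{+}|$ (equivalently of $\lambda$), is closed and that $J$ is integrable. This is the entire analytic content of the theorem; it requires actually expanding $\delta W^{+}=0$ for $W^{+}=\lambda(\mathrm{Id}-3\Pi_{\omega})$ to express $\nabla\omega$ in terms of $d\log\lambda$ and $\omega$, from which one reads off that $\lambda^{1/3}\omega$ is closed. Everything in your third paragraph after that point is reverse-engineering of constants that is only legitimate once the Kähler property is already established, since $|W^{+}|_{g}=|s|/\sqrt{24}$ presupposes $g$ Kähler. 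The second gap is the dichotomy $Z=\varnothing$ or $Z=M$ for $Z=\{W^{+}=0\}$: real-analyticity gives only that $Z$ is nowhere dense, and ``a maximum-principle argument'' is not yet an argument. To execute it one needs an honest second-order differential inequality for a power of $|W^{+}|$ valid across $Z$ --- either via the refined Kato inequality for degenerate harmonic $W^{+}$ applied to the Weitzenb\"ock formula, or by showing that $u=|W^{+}|^{1/3}$ (which on $M\setminus Z$ is a constant multiple of $|s|$ and there satisfies a semilinear equation coming from the conformal scalar curvature relation) extends with enough regularity to $Z$ for the strong maximum principle to apply; the exponent $1/3$ is forced, not a free choice. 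Without these two computations the proposal is a correct table of contents for Derdzinski's proof rather than a proof.
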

LeBrun used this observation to prove the following structural result for non-K\"ahler, Hermitian Einstein metrics on complex surfaces. 
\begin{proposition}[LeBrun]\label{LeBrun}
Let $(M^{4},J,g_{e})$ be a compact non-K\"ahler, Einstein Hermitian manifold then there is an extremal K\"ahler metric $g_{k}$ on $(M,J)$ with non-constant scalar curvature $s_{k}$ such that $g_{e}=s_{k}^{-2}g_{k}$.
\end{proposition}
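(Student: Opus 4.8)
The plan is to verify, for the metric $g_{e}$, the hypothesis of Derdzinski's Proposition~\ref{Derd} and then read off the conclusion. The entry point is the observation that the Hermitian hypothesis is exactly what makes the self-dual Weyl tensor have a degenerate spectrum. Orienting $M$ by $J$, the fundamental two-form $\omega$ of $g_{e}$ is a section of $\Lambda^{+}$ (the bundle of self-dual two-forms), and the integrability of $J$ forces $\omega/|\omega|$ to be an eigenvector of $W^{+}$ while $W^{+}$ acts as a scalar on the orthogonal complement of $\omega$ inside $\Lambda^{+}$, namely the real bundle underlying $\Lambda^{0,2}\oplus\Lambda^{2,0}$. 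Hence at each point $W^{+}$ has at most two distinct eigenvalues, of the shape $\tfrac{\kappa}{6}(2,-1,-1)$ for a function $\kappa$ (the conformal scalar curvature). This is the one genuinely computational input, and I would establish it by a direct calculation in a local unitary coframe, using only integrability; no global hypotheses enter here.

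With the eigenvalue hypothesis confirmed, Proposition~\ref{Derd} applies and yields a dichotomy: either $W^{+}\equiv 0$, or else the globally defined conformal rescaling $g:=(24)^{1/3}|W^{+}|^{2/3}g_{e}$ is an extremal K\"ahler metric with scalar curvature $s$ nowhere zero and $g_{e}=s^{-2}g$. The crux is to eliminate the first alternative. By the eigenvalue description, $W^{+}\equiv 0$ is equivalent to $\kappa\equiv 0$. Now $\kappa$ differs from the ordinary scalar curvature $s$ of $g_{e}$ only by terms involving the Lee form $\theta$ and a divergence; integrating the resulting Hermitian identity over the compact manifold kills the divergence, leaving a relation between $\int s$ and $\int|\theta|^{2}$. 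When $\Lambda>0$ (the case of the Page and CLW metrics, where $s=4\Lambda>0$) this is immediately contradictory, and in general it forces $\theta\equiv 0$, i.e.\ $g_{e}$ K\"ahler---contradicting the hypothesis. I expect this exclusion step to be the main obstacle, since it is where the non-K\"ahler assumption, the Einstein condition and compactness must genuinely interact.

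It remains only to package the surviving alternative. Setting $g_{k}:=g$ and $s_{k}:=s$, Derdzinski's proposition guarantees that $g_{k}$ is a globally defined extremal K\"ahler metric on $(M,J)$ with $g_{e}=s_{k}^{-2}g_{k}$ and $s_{k}$ nowhere vanishing, exactly as claimed. Finally I would observe that $s_{k}$ cannot be constant: were it constant, $g_{e}=s_{k}^{-2}g_{k}$ would be a constant multiple of $g_{k}$, hence itself K\"ahler, again violating the non-K\"ahler hypothesis. This completes the plan; the only steps requiring real work beyond invoking Proposition~\ref{Derd} are the pointwise reduction of $W^{+}$ for Hermitian metrics together with the global argument ruling out $W^{+}\equiv 0$.
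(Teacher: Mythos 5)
First, a point of reference: the paper does not actually prove this proposition --- it is quoted from LeBrun's article [LeBrun] and used as a black box, so the comparison here is with LeBrun's published argument rather than with anything in the text. Your overall architecture does match that argument: verify the hypothesis of Proposition \ref{Derd} for $g_{e}$, invoke it, exclude the alternative $W^{+}\equiv 0$, and finally exclude constant $s_{k}$. The last step (constant $s_{k}$ would make $g_{e}$ a constant multiple of a K\"ahler metric, contradicting the non-K\"ahler hypothesis) is fine.

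The genuine gap is in what you call ``the one genuinely computational input.'' It is \emph{not} true that integrability of $J$ alone forces $\omega/|\omega|$ to be an eigenvector of $W^{+}$ with $W^{+}$ scalar on $\Lambda^{0,2}\oplus\Lambda^{2,0}$. For a general Hermitian $4$-manifold only the trace $W^{+}(\omega,\omega)=\kappa/6$ is controlled; the off-diagonal block of $W^{+}$ mixing $\mathbb{R}\omega$ with the anti-canonical part need not vanish, so $W^{+}$ is generically non-degenerate. Indeed, by Derdzinski--Boyer, a nowhere-zero degenerate $W^{+}$ forces the metric to be locally conformally K\"ahler, which a generic non-K\"ahler Hermitian metric is not. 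The degeneracy you need is the Riemannian Goldberg--Sachs theorem: it requires $\delta W^{+}=0$, which holds here because $g_{e}$ is Einstein (second Bianchi identity). So the Einstein condition must enter precisely the pointwise computation you claim uses ``only integrability''; as written, that lemma is false and the local unitary-coframe calculation you propose would not close. Two smaller caveats: your exclusion of $W^{+}\equiv 0$ hinges on the sign of the $|\theta|^{2}$ term in the integrated identity relating $\kappa$ and $s$, which you have not pinned down (the claimed ``immediate'' contradiction for $\Lambda>0$ is not obviously of the right sign), and Proposition \ref{Derd} only yields a \emph{locally} conformally K\"ahler metric, so in the compact case one still has to argue that the local K\"ahler structures glue to a global one compatible with the given $J$ --- a step LeBrun supplies and you omit.
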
 
Both the Page and the CLW metrics are Hermitian and so are conformal to extremal K\"ahler metrics by LeBrun's result. The following proposition is the technical heart of this paper. It shows that one can compute $\langle \nabla_{e}s_{k}^{p},\nabla_{e}s_{k}^{q}\rangle_{L^{2}(g_{e})}$ as integrals involving only rational functions of $s_{k}$ and the K\"ahler metric $g_{k}$. As we shall see in section 3, this enables explicit calculations in both the case of the Page metric and the CLW metric.
  
\begin{proposition}\label{Sprop}
Let $(M^{4},g_{k})$ be a Riemannian manifold and let $s_{k}$ be the scalar curvature of $g_{k}$. Let $\kappa$ be the scalar curvature of the metric $g_{e} = s^{-2}_{k}g_{k}$.  Then for $p+q \neq 1$ we have the following formula

\begin{equation}\label{ests}
\int_{M}\langle \nabla_{e}s^{p}_{k}, \nabla_{e}s^{q}_{k}\rangle dV_{e}= \frac{pq}{6(p+q-1)}\int_{M}(s_{k}^{4}-\kappa s_{k})s^{p+q-5}dV_{k}.
\end{equation}

In particular we have

\begin{equation}\label{estsrecip}
\int_{M}|\nabla_{e} s_{k}^{p}|^{2}dV_{e}= \frac{p^{2}}{6(2p-1)}\int_{M}(s_{k}^{4}-\kappa s_{k})s^{2p-5}dV_{k},
\end{equation}
where $dV_{e}$ and $dV_{k}$  are the volume forms for $g_{e}$ and $g_{k}$ respectively. 
\end{proposition}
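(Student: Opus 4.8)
The plan is to push everything through the conformal change $g_e = s_k^{-2}g_k$ and reduce the statement to a single integration by parts on $(M,g_k)$. Write $s = s_k$ and $n=4$. First I would record the pointwise conformal relations induced by $g_e = s^{-2}g_k$: the volume forms satisfy $dV_e = s^{-4}\,dV_k$, while gradients transform so that $\langle \nabla_e u, \nabla_e v\rangle_{g_e} = s^{2}\langle \nabla_k u, \nabla_k v\rangle_{g_k}$ for any functions $u,v$. Combining these with $\nabla_k s^p = p\,s^{p-1}\nabla_k s$ and $\nabla_k s^q = q\,s^{q-1}\nabla_k s$ collapses the integrand on the left of (\ref{ests}) to
\begin{equation*}
\langle \nabla_e s^p, \nabla_e s^q\rangle_{g_e}\,dV_e = pq\, s^{p+q-4}\,|\nabla_k s|^2\,dV_k,
\end{equation*}
so the claim reduces to the scalar identity
\begin{equation*}
\int_M s^{p+q-4}|\nabla_k s|^2\,dV_k = \frac{1}{6(p+q-1)}\int_M \left(s^{p+q-1} - \kappa\, s^{p+q-4}\right)dV_k.
\end{equation*}

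Next I would bring in the only genuinely geometric input, namely the behaviour of scalar curvature under the conformal change. In dimension four the conformal transformation law (equivalently, the conformal Laplacian $-6\Delta_k u + s\,u$ applied to $u = s^{-1}$, since $g_e = u^2 g_k$) gives a pointwise expression for $\kappa$ in terms of $s$ and its first two derivatives. Using the elementary identity $\Delta_k(s^{-1}) = -s^{-2}\Delta_k s + 2s^{-3}|\nabla_k s|^2$, this works out to
\begin{equation*}
\kappa = s^3 + 6\,s\,\Delta_k s - 12\,|\nabla_k s|^2,
\end{equation*}
where $\Delta_k = \mathrm{div}_k\nabla_k$ is the Laplace--Beltrami operator of $g_k$ in divergence form. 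Rearranging and multiplying by $s^{p+q-4}$ turns the desired scalar identity into a relation among $\int s^{p+q-4}|\nabla_k s|^2$, $\int s^{p+q-1}$, and the single mixed term $\int s^{p+q-3}\Delta_k s$.

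The final step is an integration by parts: on a closed manifold the divergence theorem gives $\int_M s^{p+q-3}\Delta_k s\,dV_k = -(p+q-3)\int_M s^{p+q-4}|\nabla_k s|^2\,dV_k$, which re-expresses the Laplacian term purely through the gradient integral. Substituting this back and collecting the $|\nabla_k s|^2$ contributions produces the coefficient $12 + 6(p+q-3) = 6(p+q-1)$ on the left, and dividing through recovers (\ref{ests}); then (\ref{estsrecip}) is simply the case $q=p$. The hypothesis $p+q\neq 1$ enters precisely here, as it is exactly the condition allowing division by $6(p+q-1)$.

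I do not anticipate a serious obstacle: once the conformal dictionary is in place the argument is essentially bookkeeping. The one point demanding care is the scalar-curvature transformation law --- getting both the numerical coefficients and the sign of the Laplacian term right (the paper's convention makes $\Delta$ nonnegative, whereas the conformal formula is cleanest in divergence form), since an error there propagates directly into the final coefficient $6(p+q-1)$. I would therefore verify that step by tracking the conformal Laplacian applied to $u=s^{-1}$ carefully before performing the integration by parts.
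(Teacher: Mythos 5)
Your argument is correct and is essentially the paper's proof seen from the other conformal frame: both rest on the four-dimensional conformal transformation law for scalar curvature followed by a single integration by parts, the paper carrying this out with respect to $g_{e}$ (via $\Delta_{e}s_{k}=\tfrac{1}{6}(s_{k}^{4}-\kappa s_{k})$ and the identity for $s_{k}^{p}\Delta_{e}s_{k}^{q}$), and you with respect to $g_{k}$ (via the expansion $\kappa=s_{k}^{3}+6s_{k}\Delta_{k}s_{k}-12|\nabla_{k}s_{k}|^{2}$ in divergence form). All of your coefficients check out, including the final bookkeeping $12+6(p+q-3)=6(p+q-1)$ and the role of the hypothesis $p+q\neq 1$.
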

\begin{proof}
We begin by noting the formula for how the scalar curvature of a $4$-manifold changes under conformal rescaling cf \cite{Bes}. If $g_{1} = \phi^{2}g_{e}$ then 
\begin{equation}\label{confs}
s_{1}\phi^{3} = 6\Delta_{e}\phi+\phi s_{e}
\end{equation}
where $s_1$ and $s_{e}$ are the scalar curvatures of $g_{1}$ and $g_{e}$ respectively.
The proof follows from noting that
$$\langle \nabla_{e}s_{k}^{p},\nabla_{e}s_{k}^{q}\rangle = pqs_{k}^{p+q-2}|\nabla_{e}s_{k}|^{2}$$
and
$$s_{k}^{p}\Delta_{e} s_{k}^{q} = qs_{k}^{p+q-1}\Delta_{e} s_{k}-q(q-1)s^{p+q-2}|\nabla_{e}s_{k}|^{2}.$$
Hence
$$q(q-1)s_{k}^{p+q-2}|\nabla_{e}s_{k}|^{2}+s^{p}_{k}\Delta_{e} s^{q}_{k}=qs_{k}^{p+q-1}\frac{1}{6}(s_{k}^{4}-\kappa s_{k})$$
and so
$$\frac{(q-1)}{p}\langle \nabla_{e}s_{k}^{p},\nabla_{e}s_{k}^{q}\rangle+s^{p}_{k}\Delta_{e} s^{q}_{k}=qs_{k}^{p+q-1}\frac{1}{6}(s_{k}^{4}-\kappa s_{k}).$$
The result follows from integrating by parts and noting that $dV_{e} =s^{-4}dV_{k}$.
\end{proof}
In order to use the above proposition, we need to be able to calculate the scalar curvature $\kappa$ of the Einstein metric in terms of data involving the K\"ahler metric $g_{k}$. This is achieved by the following 
\begin{lemma}
Let $(M^{4},g_{e})$ be an Einstein metric satisfying $\Ric(g_{e})=\Lambda g_{e}$. Suppose further that $g_{e}=s^{-2}_{k}g_{k}$ for a K\"ahler metric  $g_{k}$ with scalar curvature $s_{k}$. Then
\begin{equation}\label{EinConst}
\Lambda = \sqrt{\frac{96\pi^{2}\chi(M)+144\pi^{2}\tau(M)-\int_{M}s^{2}_{k}dV_{g_{k}}}{8Vol(g_{e})}},
\end{equation}
where $Vol(g_{e})=\int_{M}s^{-4}_{k}dV_{k}$ is the volume of $M$ with respect to the Einstein metric $g_{e}$.
\end{lemma}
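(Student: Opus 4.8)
The plan is to read off $\Lambda$ from the two Gauss--Bonnet-type integral formulas for a four-manifold, specialised to the Einstein condition, together with the fact that $g_{e}$ and $g_{k}$ are conformal. Recall that for any compact oriented Riemannian four-manifold one has
$$\chi(M) = \frac{1}{8\pi^{2}}\int_{M}\left(\frac{s^{2}}{24} - \frac{|\mathring{r}|^{2}}{2} + |W^{+}|^{2} + |W^{-}|^{2}\right)dV \quad\text{and}\quad \tau(M) = \frac{1}{12\pi^{2}}\int_{M}\left(|W^{+}|^{2} - |W^{-}|^{2}\right)dV,$$
where $\mathring{r}$ is the trace-free Ricci tensor and $W^{\pm}$ the (anti-)self-dual Weyl curvature. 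I would form the combination $2\chi(M)+3\tau(M)$, which cancels the $|W^{-}|^{2}$ term, and apply it to the Einstein metric $g_{e}$. Since $\Ric(g_{e})=\Lambda g_{e}$ forces $\mathring{r}=0$ and $s_{e}=4\Lambda$ constant, this yields
$$2\chi(M)+3\tau(M) = \frac{1}{4\pi^{2}}\int_{M}\left(\frac{s_{e}^{2}}{24}+2|W^{+}_{e}|^{2}\right)dV_{e} = \frac{1}{4\pi^{2}}\left(\frac{2\Lambda^{2}}{3}\,\mathrm{Vol}(g_{e})+2\int_{M}|W^{+}_{e}|^{2}\,dV_{e}\right).$$

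The remaining task is to evaluate $\int_{M}|W^{+}_{e}|^{2}\,dV_{e}$ in terms of the K\"ahler data, and here I would use two facts. First, the four-density $|W^{+}|^{2}\,dV$ is a pointwise conformal invariant in dimension four (the $(1,3)$-Weyl tensor and the splitting of two-forms into self- and anti-self-dual parts are both conformally invariant, and the metric factors combine to cancel the conformal weight of $dV$). Since $g_{e}=s_{k}^{-2}g_{k}$ is a conformal, orientation-preserving rescaling, $|W^{+}_{e}|^{2}\,dV_{e}=|W^{+}_{k}|^{2}\,dV_{k}$ pointwise. Second, for a K\"ahler surface the self-dual Weyl curvature is completely determined by the scalar curvature, giving the pointwise identity $|W^{+}_{k}|^{2}=s_{k}^{2}/24$. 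Combining these,
$$\int_{M}|W^{+}_{e}|^{2}\,dV_{e}=\int_{M}|W^{+}_{k}|^{2}\,dV_{k}=\frac{1}{24}\int_{M}s_{k}^{2}\,dV_{k}.$$

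Substituting back and solving the resulting linear equation for $\Lambda^{2}$ gives exactly
$$\Lambda^{2}=\frac{96\pi^{2}\chi(M)+144\pi^{2}\tau(M)-\int_{M}s_{k}^{2}\,dV_{k}}{8\,\mathrm{Vol}(g_{e})},$$
which is the claim after taking the positive square root (legitimate since $\Lambda>0$). I expect the only delicate points to be bookkeeping rather than conceptual: first, fixing the orientation to be the complex one, so that it is $W^{+}$ (and hence the combination $2\chi+3\tau$, not $2\chi-3\tau$) that is controlled by the K\"ahler scalar curvature; and second, keeping the normalisation conventions for $|W^{+}|^{2}$ consistent across the Gauss--Bonnet formula, the signature formula, and the K\"ahler identity $|W^{+}|^{2}=s_{k}^{2}/24$, since these norms differ by fixed numerical factors in some references. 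The identity $dV_{e}=s_{k}^{-4}\,dV_{k}$ recorded in the proof of Proposition~\ref{Sprop} also confirms the stated expression $\mathrm{Vol}(g_{e})=\int_{M}s_{k}^{-4}\,dV_{k}$.
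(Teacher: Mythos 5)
Your proposal is correct and follows essentially the same route as the paper: combine the four-dimensional Gauss--Bonnet and Hirzebruch signature formulas (specialised to the Einstein condition), use conformal invariance of the self-dual Weyl integrand to pass to the K\"ahler metric, and then apply the K\"ahler identity $|W^{+}|^{2}=s_{k}^{2}/24$. The only cosmetic difference is that you invoke pointwise conformal invariance of $|W^{+}|^{2}\,dV$ directly, whereas the paper uses conformal invariance of $\int_{M}|W|^{2}\,dV$ together with the signature formula evaluated at $g_{k}$; the algebra and the resulting identity $\Lambda^{2}\mathrm{Vol}(g_{e})=12\pi^{2}\chi(M)+18\pi^{2}\tau(M)-3\int_{M}|W^{+}(g_{k})|^{2}\,dV_{k}$ are the same.
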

\begin{proof}
We begin by recalling the Allendoerfer-Weil version of the Gauss-Bonnet theorem for Einstein metrics in dimension 4; 
$$\chi(M) = \frac{1}{8\pi^{2}}\int_{M}|W(g_{e})|^{2}+\frac{2\Lambda^{2}}{3}dV_{e},$$
where $\chi(M)$ is the Euler characteristic of $M$ and $W(g_{e})$ is the Weyl curvature of $g_{e}$. The term
$$\int_{M}|W(g_{e})|^{2}dV_{e}$$
is conformally invariant and so we can compute it with respect to the K\"ahler metric $g_{k}$. We also recall the Hirzebruch signature formula (valid for any metric $g$)
$$\tau(M) = \frac{1}{12\pi^{2}}\int_{M}|W^{+}(g)|^{2}-|W^{-}(g)|^{2}dV_{g}$$
where $\tau(M)$ is the signature of $M$ and $W^{+}(g), W^{-}(g)$ are the self-dual and anti self-dual components of the Weyl curvature of $g$ respectively.  Putting all this togther with the pointwise equality
$$|W(g)|^{2}=|W^{+}(g)|^{2}+|W^{-}(g)|^{2},$$
we arrive at
$$\Lambda^{2}Vol(g_{e}) = 12\pi^{2}\chi(M) +18\pi^{2}\tau(M)-3\int_{M}|W^{+}(g_{k})|^{2}dV_{k}.$$
In order to evaluate the last integral we use a standard fact from K\"ahler geometry that
$$|W^{+}(g_{k})|^{2} = \frac{s_{k}^{2}}{24}.$$
The formula for $\Lambda$ now follows.
\end{proof}

\section{Toric-K\"ahler metrics}\label{TorKal}
As mentioned in the previous section, the K\"ahler metrics that are conformal to both the Page and CLW metrics happen to belong to a special class of metric called \emph{extremal toric-K\"ahler} metrics.  There is a rich and deep theory that these metrics fit in to and we refer the reader to Simon Donaldson's survey of the area \cite{Dontor} for background.\\
\\
The essential features of the $4$-dimensional theory that we will use are the following:
\begin{itemize}
\item There is an open set $M^{\circ}\subset M$ with $M^{\circ} \cong P^{\circ} \times \mathbb{T}^{2}$
\item $P \subset \mathbb{R}^{2}$ is a convex polytope known as the \emph{moment polytope}
\item The volume form in the $P \times \mathbb{T}^{2}$ coordinates  is 
$$dV_{k}=dx_{1}\wedge dx_{2} \wedge d\theta_{1} \wedge d\theta_{2}$$
\item The scalar curvature is an affine function of the coordinates on the moment polytope, i.e.
$$s_{k} = c_{1}x_{1}+c_{2}x_{2}+c_{3}$$
for constants $c_{i}$. In fact both metrics are symmetric under an additional $\mathbb{Z}_{2}$ action $x_{1} \leftrightarrow x_{2}$ and so $c_{1}=c_{2}$.
\end{itemize}
We note that the convention we follow in this paper is that the torus fibres have volume $4\pi^{2}$.  This is different to the convention followed in \cite{Dontor}. Putting all these facts together it is not hard to see that the integral of any function of the scalar curvature (especially any rational function) would be easy to compute explicitly as one would be integrating a function in two variables over a polytope in $\mathbb{R}^{2}$.\\  

The moment polytope $P$ is essentially determined by the K\"ahler class
\mbox{$[\omega_{k}] \in H^{2}(M,\mathbb{R})$}. The K\"ahler classes that contain the extremal metrics $g_{k}$ are themselves very special.  They are the classes that contain extremal metrics with the least Calabi energy.  We will not discuss this further but this fact enables the K\"ahler classes to be determined explicitly. We will now give the proof of the main theorem. 

\begin{proof} [Proof of Theorem \ref{main} for the Page metric]
Here we follow the description of the metric given in \cite{HHS}.  This description is originally due to Abreu \cite{Abr} and the existence of the extremal metric is due to Calabi \cite{Cal}. The fact that the metric is actually $U(2)$-invariant allows a concrete description of the metric in this case.\\
\\
The moment polytope is a trapezium (trapezoid) $T \subset \mathbb{R}^{2}$ given as the set of points $(x_{1},x_{2}) \in \mathbb{R}^{2}$ satisfying  the inequalities $l_{i}(x)>0$ where
$$ l_{1}(x) = x_{1}, \ l_{2}(x) = x_{2}, \ l_{3}(x) = (1-x_1-x_2) , \  l_{4}(x) = (x_1+x_2-a).$$  
Here $a$ is a constant $0<a<1$ that determines the K\"ahler class by varying the volume of the exceptional divisor.  As mentioned in \cite{HHS} the class containing the K\"ahler metric conformal to the Page metric is the only root of
$$ 1-6a^{2}-16a^{3}+9a^{4}=0 $$
in the interval $(0,1)$.  Even though it can be explicitly described, we will take  $a \approx 0.3141$ to 4 significant figures.\\
\\
The scalar curvature of the extremal metric is given by 
$$s_{k}(x_{1},x_{2}) = c_{1}(x_{1}+x_{2})+c_{2},$$ 
where
$$c_{1} = \frac{48a}{(1-a)(1+4a+a^{2})} \textrm{ and } c_2 = \frac{12(1-3a^{2})}{(1-a)(1+4a+a^{2})}.$$
The following explicit formulae for integrals of powers of $s_{k}$ make it very clear that we can obtain as high precision as required by computing more of the decimal expansion of $a$. We first note that the integral over the trapezium can be simplified as
$$\int_{T} \left(c_{1}(x_{1}+x_{2})+c_{2}\right)^{q}dx_{1}dx_{2} = \int_{a}^{1}(c_{1}t+c_{2})^{q}t dt.$$
Thus when $q \neq-1,-2$ we have:
$$\int_{T} \left(c_{1}(x_{1}+x_{2})+c_{2}\right)^{q}dx_{1}dx_{2} = \left[\frac{(c_{1}t+c_{2})^{q+1}}{(q+1)c_{1}}\left(t-\frac{(c_{1}t+c_{2})}{(q+2)c_{1}}\right)\right]^{1}_{a}.$$
When $q=-1$ we have
$$\int_{T}\left(c_{1}(x_{1}+x_{2})+c_{2}\right)^{-1}dx_{1}dx_{2} = -\left[\frac{c_{2}}{c_{1}^{2}}\log(c_{1}t+c_{2})-c_{1}t\right]_{a}^{1},$$
and when $q=-2$ the formula is
$$\int_{T}\left(c_{1}(x_{1}+x_{2})+c_{2}\right)^{-2}dx_{1}dx_{2} =\left[-\frac{t}{c_{1}(c_{1}t+c_{2})}\right]_{a}^{1}+\left[\frac{1}{c_{1}^{2}}\log(c_{1}t+c_{2})\right]_{a}^{1}.$$
The volume of the Page metric in this representation is 
$$Vol(g_{P}) = \int_{M}s_{k}^{-4}dV_{k} =4\pi^{2}\int_{T}s_{k}(x_{1},x_{2})^{-4}dx_{1}dx_{2}  \approx 0.001136.$$
The Einstein constant is given by the formula (\ref{EinConst})
$$\Lambda = \sqrt{\frac{96\pi^{2}\chi(M)+144\pi^{2}\tau(M)-\int_{M}s_{k}^{2}dV_{k}}{8Vol(g_{P})}},$$
where $\chi(M)$ is the Euler characteristic of $M$ and $\tau(M)$ is the signature.  In the case of the Page metric, $\chi(\mathbb{CP}^{2}\sharp\overline{\mathbb{CP}}^{2})=4$ and $\tau(\mathbb{CP}^{2}\sharp\overline{\mathbb{CP}}^{2})=0$ yielding $\Lambda  \approx 364.44$.
\\
We now evaluate the integrals for the test function $s_{k}^{-1}$. Using (\ref{estsrecip}) we have
\begin{align*}
\|\nabla_{P}s_{k}^{-1}\|^{2}_{L^{2}(g_{P})} =& \int_{M}|\nabla_{P} s_{k}^{-1}|^{2}dV_{P}\\
 =& \frac{1}{18}\int_{M}\left(4\Lambda s_{k}^{-6}-s_{k}^{-3}\right)dV_{k} \\
 \approx & 0.0001843.
\end{align*}
The average value of $s_{k}^{-1}$, denoted $\langle s_{k}^{-1} \rangle$, is given by
$$\langle s_{k}^{-1} \rangle = \frac{1}{Vol(g_{P})}\int_{M}s_{k}^{-1} \ dV_{P}\approx 0.09559$$
and hence
\begin{align*}
\|s_{k}^{-1}-\langle s_{k}^{-1}\rangle\|^{2}_{L^{2}(g_{P})} =&\int_{M}(s^{-1}_{k}-\langle s_{k}^{-1}\rangle)^{2}dV_{e}\\
=& \int_{M}(s^{-1}_{k}-\langle s_{k}^{-1}\rangle)^{2}s_{k}^{-4}dV_{k}\\
\approx & 2.686 \times 10^{-7}.
\end{align*}
This then gives the estimate
$$ \lambda_{1}^{P} \leq \frac{ \|\nabla_{P}s^{-1}_{k}\|^{2}_{L^{2}(g_{P})}}{\|s^{-1}_{k}-\langle s^{-1}_{k}\rangle\|^{2}_{L^{2}(g_{P})}} \approx 686.2$$
and an invariant estimate
$$\lambda_{1}^{P} \leq 1.883 \Lambda.$$
\end{proof}

\begin{proof}[Proof of Theorem \ref{main} for the CLW metric]

Again we use the description that appears in \cite{HHS}. The moment polytope $P \subset \mathbb{R}^{2}$ is a pentagon which can be described as the set of points $(x_{1},x_{2}) \subset \mathbb{R}^{2}$ satisfying the inequalities $l_{i}(x)>0$ where
$$ l_{1}(x) = x_{1}, \ l_{2}(x) = x_{2}, \ l_{3}(x) = (1-x_1) , \  l_{4}(x) = (1-x_2), l_{5}(x) = (1+a-x_1-x_2).$$
Here $a$ is a constant that determines the K\"ahler class by varying the volume of the exceptional divisor when we view $\mathbb{CP}^{2}\sharp2\overline{\mathbb{CP}}^{2}$ as $(\mathbb{CP}^{1} \times \mathbb{CP}^{1})\sharp\overline{\mathbb{CP}}^{2}$.  The value of $a$ corresponding to the critical K\"ahler class has been calculated by LeBrun \cite{LeBrun} to be $a \approx 1.958$. Again, in principle, we could compute $a$ to any required accuracy as it is the solution of a polynomial equation.\\
\\
Using some of Donaldson's theory outlined in \cite{Dontor} we can calculate that the constants $c_{1}$ and $c_{2}$ that define the scalar curvature
$$s_{k}(x_1,x_2)=c_{1}(x_{1}+x_{2})+c_{2},$$
where
$$c_{1} = \frac{2}{3}(1-a^{3}) \textrm{ and } c_{2} = \frac{12(a^{5}+7a^{4}+6a^{3}+2a^{2}-5a-3)}{a^{6}+6a^{5}+9a^{4}+4a^{3}-3a^{2}-6a+1}.$$ 
As with the case of the Page metric, we give the explicit formulae for integrals of powers of the scalar curvature $s_{k}$. We first note that 
$$\int_{P}(c_{1}(x_{1}+x_{2})+c_{2})^{q}dx_{1}dx_{2} = \int_{0}^{a}(c_{1}t+c_{2})^{q}tdt+\int_{a}^{a+1}(c_{1}t+c_{2})^{q}(2a-t)dt.$$
This yields for $q \neq-1,-2$
\begin{align*}
\int_{P}(c_{1}(x_{1}+x_{2})+c_{2})^{q}dx_{1}dx_{2} =& 
\left[\frac{(c_{1}t+c_{2})^{q+1}}{c_{1}(q+1)}\left(t-\frac{(c_{1}t+c_{2})}{c_{1}(q+2)}\right) \right]_{0}^{a}\\
&+ \left[\frac{(c_{1}t+c_{2})^{q+1}}{c_{1}(q+1)}\left((2a-t)+\frac{(c_{1}t+c_{2})}{c_{1}(q+2)}\right) \right]_{a}^{1+a}.
\end{align*}
When $q=-1$ we have
\begin{align*}
\int_{P}(c_{1}(x_{1}+x_{2})+c_{2})^{-1}dx_{1}dx_{2} =& -\left[\frac{c_{2}}{c_{1}^{2}}(c_{1}t+c_{2})\log(c_{1}t+c_{2})-c_{1}t\right]_{0}^{a}\\
&+ \left[\frac{(2a-t)}{c_{1}}\log (c_{1}t+c_{2})\right]^{a}_{a+1}\\
&+\left[\frac{1}{c_{1}^{2}}(c_{1}t+c_{2})\log(c_{1}t+c_{2})-c_{1}t\right]_{a}^{a+1},
\end{align*}
and for $q=-2$
\begin{align*}
\int_{P}(c_{1}(x_{1}+x_{2})+c_{2})^{-2}dx_{1}dx_{2} =& \left[-\frac{t}{c_{1}(c_{1}t+c_{2})}\right]_{0}^{a}+\left[\frac{1}{c_{1}^{2}}\log(c_{1}t+c_{2})\right]_{0}^{a}\\
&-\left[\frac{(2a-t)}{c_{1}(c_{1}t+c_{2})}\right]_{a}^{a+1}-\left[\frac{1}{c_{1}^{2}}\log (c_{1}t+c_{2})\right]_{a}^{a+1}.
\end{align*}
The volume of the CLW metric in this representation is 
$$Vol(g_{CLW}) = \int_{M}s_{k}^{-4}dV_{k} =4\pi^{2}\int_{P}s_{k}(x_{1},x_{2})^{-4}dx_{1}dx_{2} \approx 0.5834$$
Again, the Einstein constant $\Lambda$ can be computed from the formula (\ref{EinConst}). In the case of the CLW metric, $\chi(\mathbb{CP}^{2}\sharp2\overline{\mathbb{CP}}^{2})=5$ and $\tau(\mathbb{CP}^{2}\sharp2\overline{\mathbb{CP}}^{2})=-1$ yielding $\Lambda \approx 15.09$.
\\
We now evaluate the integrals for the test function $s_{k}^{-1}$;

\begin{align*}
\|\nabla_{CLW}s_{k}^{-1}\|^{2}_{L^{2}(g_{CLW})} &= \int_{M}|\nabla_{CLW} s_{k}^{-1}|^{2}dV_{CLW} \\
&= \frac{1}{18}\int_{M}\left(\Lambda s_{k}^{-6}-s_{k}^{-3}\right)dV_{k}\\
&\approx 0.02081.
\end{align*}
We also have
$$\langle{s}_{k}^{-1}\rangle = \frac{1}{Vol(g_{CLW})}\int_{M}s_{k}^{-1} \ dV_{CLW}\approx 0.2687,$$
\begin{align*}
\|s_{k}^{-1}-\langle{s}_{k}^{-1}\rangle\|^{2}_{L^{2}(g_{CLW})} &=\int_{M}(s^{-1}_{k}-\langle{s}_{k}^{-1}\rangle)^{2}dV_{CLW}\\
&= \int_{M}(s^{-1}_{k}-\langle{s}_{k}^{-1}\rangle)^{2}s_{k}^{-4}dV_{k} \\
&\approx 0.0006545.
\end{align*}
This then gives the estimate
$$ \lambda_{1}^{CLW} \leq \frac{ \|\nabla_{CLW}s^{-1}_{k}\|^{2}_{L^{2}(g_{CLW})}}{\|s^{-1}_{k}-\langle{s}_{k}^{-1}\rangle\|^{2}_{L^{2}(g_{CLW})}} \approx 31.79$$
and an invariant estimate
$$\lambda_{1}^{CLW} \leq 2.107\Lambda.$$
\end{proof}
We state the bounds in Theorem \ref{main} to 3 significant figures. As remarked previously, greater precision in the calculation of the parameter $a$ in both cases would lead to greater precision in the bounds.  The main point is we can be confident that $\lambda^{P}_{1} <2\Lambda$. 

\section{The Matsushima theorem}\label{Disc}
The choice of $s_{k}^{-1}$ as the test function in the proof of Theorem \ref{main} might not seem the most natural. However if one takes $s_{k}$ (normalised to have integral 0) as a test function for example, then the Rayleigh quotient is
$$\frac{\|\nabla s_{k}\|^{2}_{L^{2}(g_{P})}}{\|s_{k}\|^{2}_{L^{2}(g_{P})}}\approx 1.968\Lambda$$
for the Page metric and
$$\frac{\|\nabla s_{k}\|^{2}_{L^{2}(g_{CLW})}}{\|s_{k}\|^{2}_{L^{2}(g_{CLW})}}\approx 2.231\Lambda$$
for the CLW metric. A heuristic reason for why on might expect $s_{k}^{-1}$ to give a better bound than $s_{k}$ comes from examining what happens in the K\"ahler-Einstein case.  Here one has the classical estimate due to Matsushima \cite{Mat} and later generalised by Lichnerowicz \cite{Lic}. We use the version stated in \cite{Bes}.
\begin{theorem}[Matsushima, Theorem 11.52 in \cite{Bes}]
Let $(M,g)$ be a K\"ahler-Einstein metric satisfying
$$\Ric (g)=\Lambda g \textrm{ with } \Lambda>0.$$
Then the first non-zero eigenvalue of the Laplacian on scalars $\lambda_{1}$ satisfies
$$\lambda_{1}\geq 2\Lambda.$$
Furthermore, suppose equality is achieved, then 
$$\Delta f=2\Lambda f \textrm{ if and only if } \nabla f \textrm{ is a real holomorphic vector field.}$$
\end{theorem}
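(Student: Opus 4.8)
This is a classical result (Matsushima--Lichnerowicz), and the plan is to prove it by the Bochner technique, sharpened using the K\"ahler structure. Throughout I use the stated convention that $\Delta=-\tr\Hess$ has non-negative spectrum, and I argue with a real eigenfunction $f$ satisfying $\Delta f=\lambda f$. The naive starting point is the integrated Bochner--Weitzenb\"ock identity: integrating $-\tfrac12\Delta|\nabla f|^2=|\Hess f|^2-\langle\nabla f,\nabla\Delta f\rangle+\Ric(\nabla f,\nabla f)$ over $M$ and inserting $\Ric(g)=\Lambda g$ gives
\[
\int_M|\Hess f|^2\,dV=(\lambda-\Lambda)\int_M|\nabla f|^2\,dV.
\]
Combined with the pointwise Cauchy--Schwarz bound $|\Hess f|^2\ge\tfrac1n(\tr\Hess f)^2=\tfrac1n(\Delta f)^2$ this only recovers the Lichnerowicz--Obata inequality $\lambda\ge\tfrac{n}{n-1}\Lambda$; the factor-of-two improvement to $2\Lambda$ cannot come from this estimate alone and must use $J$.

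The mechanism is the interaction between the gradient and the complex structure. Since $g$ is K\"ahler, $\Hess f$ splits orthogonally into its $J$-invariant $(1,1)$-part $H'$ (whose trace is $-\Delta f$) and its $J$-anti-invariant $(2,0)+(0,2)$-part $H''$ (which is trace-free). Writing $\nabla^{1,0}f$ for the $(1,0)$-component of the gradient, the tensor $H''$ is, up to a constant, $\bar\partial\nabla^{1,0}f$, so that $H''\equiv 0$ precisely when $\nabla^{1,0}f$ is a holomorphic vector field, i.e. when the real gradient $\nabla f$ is a real holomorphic vector field. The key step is therefore to establish a genuinely K\"ahler identity of the form
\[
\int_M|\bar\partial\nabla^{1,0}f|^2\,dV=\tfrac12\int_M(\Delta f)^2\,dV-\Lambda\int_M|\nabla f|^2\,dV,
\]
in which the positive constants are fixed by the computation so that the right-hand side vanishes exactly at the threshold $\Delta f=2\Lambda f$.

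To derive this identity I would write $\int_M|\bar\partial\nabla^{1,0}f|^2=\int_M\langle\bar\partial^{*}\bar\partial\nabla^{1,0}f,\nabla^{1,0}f\rangle$ and apply a Weitzenb\"ock formula for the $\bar\partial$-Laplacian acting on $(1,0)$-vector fields. Expanding in local complex coordinates and integrating by parts twice, the commutator of $\nabla$ and $\bar\partial$ introduces the curvature; the Einstein condition $\Ric(g)=\Lambda g$ then collapses this curvature contribution into exactly the term $-\Lambda\int_M|\nabla f|^2$, while the remaining terms assemble into $\tfrac12\int_M(\Delta f)^2$ after using $\Delta=-2g^{i\bar\jmath}\partial_i\partial_{\bar\jmath}$ and $\int_M(\Delta f)^2=\lambda\int_M|\nabla f|^2$. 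Granting the identity, the conclusion is immediate: the left-hand side is non-negative, so for $\Delta f=\lambda f$ the right-hand side equals $\tfrac{\lambda}{2}(\lambda-2\Lambda)\int_M f^2\,dV\ge 0$, which forces $\lambda\ge 2\Lambda$ because $\lambda>0$. Equality $\lambda=2\Lambda$ holds if and only if $\int_M|\bar\partial\nabla^{1,0}f|^2\,dV=0$, that is $\bar\partial\nabla^{1,0}f\equiv 0$, which is exactly the statement that $\nabla f$ is a real holomorphic vector field; this gives both the bound and the characterisation of the equality case.

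The main obstacle is the careful bookkeeping in the derivation of the sharpened identity: one must track the numerical constants and, crucially, the curvature terms produced when commuting covariant derivatives past $\bar\partial$, so that the Ricci contribution enters as precisely $\Lambda\int_M|\nabla f|^2$ and the threshold lands at $2\Lambda$ rather than the weaker Lichnerowicz--Obata value. It is exactly here that the full K\"ahler condition, and not merely the Einstein condition, is indispensable, since the same computation on a general Einstein manifold produces no such improvement.
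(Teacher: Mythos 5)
The paper does not prove this theorem: it is quoted verbatim as a classical result of Matsushima--Lichnerowicz, with the reference Theorem 11.52 of Besse's \emph{Einstein Manifolds}, so there is no in-paper argument to compare yours against. Judged on its own terms, your sketch follows the standard route: the integrated Bochner identity $\int|\Hess f|^2=(\lambda-\Lambda)\int|\nabla f|^2$ is correct under the stated sign convention, you correctly identify that it only yields Lichnerowicz--Obata, and the Kähler identity you write down,
\begin{equation*}
\int_M|\bar\partial\nabla^{1,0}f|^2\,dV=\tfrac12\int_M(\Delta f)^2\,dV-\Lambda\int_M|\nabla f|^2\,dV,
\end{equation*}
is the correct specialisation to the Einstein case of the integrated Lichnerowicz operator identity $\int\langle\mathcal{L}f,f\rangle=\tfrac12\int\Delta f\,(\Delta f-2\Lambda f)$, from which the bound and the equality discussion follow exactly as you say.

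Two caveats. First, the heart of the matter --- the Weitzenb\"ock/commutation computation that produces this identity with the right constants --- is only announced (``I would write\dots and apply a Weitzenb\"ock formula\dots''), not carried out; since you yourself identify this as the step where the factor of two is won, the proof is incomplete until that calculation is done. Second, your equality discussion proves ``$\Delta f=2\Lambda f$ implies $\nabla f$ real holomorphic'' and the equivalence \emph{within the class of eigenfunctions}, but the converse direction of the stated biconditional starts from an arbitrary $f$ with $\bar\partial\nabla^{1,0}f\equiv0$ and must conclude $\Delta f=2\Lambda f$; for that you need the pointwise identity $\mathcal{L}f=\tfrac12\Delta(\Delta f-2\Lambda f)$, so that $\mathcal{L}f=0$ forces $\Delta f-2\Lambda f$ to be a constant, which vanishes after normalising $f$ to have zero mean. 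That extra step is missing from your write-up.
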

So on K\"ahler-Einstein manifolds, the functions that minimise the Rayleigh quotient are the ones with holomorphic gradients.  One is led to wonder if the same might be true on the conformally K\"ahler, Einstein manifolds we are interested in. Derdzinski's Theorem \ref{Derd} says that with respect to the K\"ahler metric $g_{k}$, $\nabla_{k}s_{k}$ is a holomorphic vector field. If we consider the gradient of $s_{k}^{-1}$ with respect to the Einstein metric $g_{e}=s_{k}^{-2}g_{k}$ we see 
$$\nabla_{e}s_{k}^{-1} = -s_{k}^{-2}\nabla_{e}s_{k}=-s_{k}^{-2}(s_{k}^{2}\nabla_{k}s_{k})=-\nabla_{k}s_{k}.$$ 
Hence on the conformally Kahler, Einstein $4$-manifolds we see that $s_{k}^{-1}$ is a function that has holomorphic gradient with respect to the Einstein metric. This gives a reason why one might expect $s_{k}^{-1}$ to be a good choice of test function.  The numerical results in section \ref{NumRes} also give strong evidence that  $s_{k}^{-1}$ is close to being optimal. 
\section{Linear stability}\label{LinStab}
\subsection{The definition of the $N$ operator}
In this section we give a few more details regarding the notion of linear stability. Einstein metrics are fixed points of the Ricci flow
\begin{equation}\label{RF}
\frac{\partial g}{\partial t}=-2\Ric (g),
\end{equation}
in the sense that they evolve via homothety. Perelman \cite{Per} introduced a functional $\nu$ that is monotone increasing under the flow (\ref{RF}) except at critical points of the functional. He showed that Einstein metrics are critical points of the $\nu$-functional. Hence it is a natural question to ask whether, starting at a perturbation of an Einstein metric $g_{e}$, the flow (\ref{RF}) converges back to the Einstein metric $g_{e}$. The monotonicity property of the functional means that if the second variation of $\nu$ in the direction $h\in \Sym(TM^{\ast})$ is positive, the perturbation $h$ destabilises the Einstein metric and the flow would not converge back to $g_{e}$. The second variation formula for the $\nu$-functional was first stated by Cao, Hamilton and Ilmanen in \cite{CHI}. We recall that in this paper we follow the convention that the spectrum of the Laplacian is non-negative:
\begin{theorem}[Cao-Hamilton-Ilmanen]\label{Ndef}
Let  $(M^{n},g)$ be a closed Einstein manifold with $\Ric (g)=\Lambda g$. For $h \in \Sym(TM^{\ast})$ consider variations $g_{t}=g+th$. Then the second variation of $\nu$ energy at $g$ is 
$$\frac{d^{2}}{dt^{2}}|_{t=0} \nu(g(t)) = \frac{2}{\Lambda Vol(g)}\int_{M}\langle Nh,h\rangle dV_{g},$$
where $N$ is given by
$$N(h) = -\frac{1}{2}\nabla^{\ast}\nabla h+\Riem(h,\cdot) +\div^{\ast}\div (h) +\frac{1}{2}\nabla^{2}v_{h}-\frac{\Lambda}{nVol(g)}\int_{M}tr(h)dV_{g}g$$ 
and $v_{h}$ is the solution of
$$\Lambda v_{h}-\Delta v_{h} = \div \div(h).$$
\end{theorem}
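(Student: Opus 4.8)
The plan is to work directly from Perelman's definition of $\nu$ and to treat it as the value of a constrained minimization, so that the second variation is governed by an envelope/Schur-complement argument. Recall \cite{Per} that $\nu(g)=\inf\{\mathcal{W}(g,f,\tau)\}$, where
$$\mathcal{W}(g,f,\tau)=(4\pi\tau)^{-n/2}\int_{M}\left[\tau\big(R+|\nabla f|^{2}\big)+f-n\right]e^{-f}\,dV_{g},$$
the infimum being over all $\tau>0$ and $f$ subject to $(4\pi\tau)^{-n/2}\int_{M}e^{-f}\,dV_{g}=1$. First I would record the Euler--Lagrange equations of this problem and check that at an Einstein metric with $\Ric(g)=\Lambda g$ the minimizer is $f\equiv\mathrm{const}$ and $\tau=1/(2\Lambda)$, the latter being forced by the soliton equation $\Ric+\nabla^{2}f=\tfrac{1}{2\tau}g$. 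Since the $(f,\tau)$-gradient of $\mathcal{W}$ vanishes there, the envelope theorem shows that the first variation of $\nu$ reduces to the partial variation of $\mathcal{W}$ in the $g$-direction, which is proportional to $\int_{M}\langle\Ric+\nabla^{2}f-\tfrac{1}{2\tau}g,\,h\rangle$ and hence vanishes; this re-derives that Einstein metrics are critical points of $\nu$.

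For the second variation, writing $\phi=(f,\tau)$ and $\phi^{*}(g)$ for the minimizer, we have $\nu(g)=\mathcal{W}(g,\phi^{*}(g))$, and differentiating twice along $g_{t}=g+th$ while using $\partial_{\phi}\mathcal{W}=0$ gives the Schur-complement formula
$$\frac{d^{2}}{dt^{2}}\Big|_{0}\nu(g_{t})=\partial^{2}_{gg}\mathcal{W}(h,h)-\partial^{2}_{g\phi}\mathcal{W}\,\big(\partial^{2}_{\phi\phi}\mathcal{W}\big)^{-1}\,\partial^{2}_{\phi g}\mathcal{W}\,(h,h).$$
The first term, the pure $g$-Hessian of $\mathcal{W}$ at fixed $(f,\tau)$, is computed by linearising the scalar curvature through $DR[h]=\Delta\tr h+\div\div h-\langle\Ric,h\rangle$ (signs dictated by the convention that $\Delta$ is non-negative), linearising the volume element through $\tfrac12\tr h$, and collecting the resulting quadratic form. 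After integration by parts this yields the local part $-\tfrac12\nabla^{*}\nabla h+\Riem(h,\cdot)+\div^{*}\div(h)$ of the operator $N$, i.e.\ a Lichnerowicz-type operator together with a gauge term.

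The remaining two terms of $N$ are exactly the Schur-complement correction. Differentiating the optimality condition $\partial_{\phi}\mathcal{W}(g_{t},\phi^{*}(g_{t}))=0$ in $t$ produces the linear response $\dot\phi^{*}=-\big(\partial^{2}_{\phi\phi}\mathcal{W}\big)^{-1}\partial^{2}_{\phi g}\mathcal{W}\,h$. Its $f$-component is precisely (a multiple of) the function $v_{h}$: linearising the $f$-Euler--Lagrange equation together with the constraint $(4\pi\tau)^{-n/2}\int_{M}e^{-f}\,dV_{g}=1$ yields the elliptic equation $\Lambda v_{h}-\Delta v_{h}=\div\div(h)$. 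Substituting $v_{h}$ back into the cross terms contributes the nonlocal piece $\tfrac12\nabla^{2}v_{h}$, while the $\tau$-component of the response (adjusting $\tau$ so that $\partial_{\tau}\mathcal{W}=0$ and the constraint persist) contributes the trace term $-\tfrac{\Lambda}{n\vol(g)}\int_{M}\tr(h)\,dV_{g}\,g$. The overall factor $2/(\Lambda\vol(g))$ is then fixed by the normalisation $(4\pi\tau)^{-n/2}e^{-f}=1/\vol(g)$ at the minimizer and the value $\tau=1/(2\Lambda)$.

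I expect the main obstacle to be the Schur-complement bookkeeping rather than any single geometric identity: one must cleanly separate the coupled $f$- and $\tau$-variations, verify that the constrained linear response of $f$ is governed by precisely $\Lambda v_{h}-\Delta v_{h}=\div\div(h)$, and confirm that the nonlocal and trace terms emerge with exactly the stated coefficients. A useful consistency check at the end is to restrict to conformal directions $h=\varphi\,g$ and recover the destabilising criterion $\lambda_{1}<2\Lambda$ of inequality (\ref{CHIBound}), which is the application that drives the rest of the paper.
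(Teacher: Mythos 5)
The paper does not prove this theorem: it states it as a quotation from Cao--Hamilton--Ilmanen, explicitly remarks that the proof was not given in \cite{CHI}, and points to Cao--Zhu \cite{CZ} for the proof of a more general second variation formula. So there is no in-paper argument to compare against. Your proposal follows what is essentially the Cao--Zhu route: realise $\nu$ as a constrained infimum of $\mathcal{W}$ over $(f,\tau)$, check that at an Einstein metric the minimiser is $(f,\tau)=(\mathrm{const},1/(2\Lambda))$, and compute the second variation as a Schur complement, with the linear response of $f$ producing the nonlocal term $\tfrac12\nabla^{2}v_{h}$ and the response of $\tau$ together with the constraint producing the trace term. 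The strategy is correct, and the individual ingredients you cite (the linearisation of scalar curvature in Besse's sign conventions, the identification $\tau=1/(2\Lambda)$ from the soliton equation, the equation $\Lambda v_{h}-\Delta v_{h}=\div\div(h)$ arising from the linearised Euler--Lagrange and constraint system) are all right.

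That said, what you have written is a plan rather than a proof, and the substance of the theorem lies precisely in the ``bookkeeping'' you defer: the pure $g$-Hessian of $\mathcal{W}$ at fixed $(f,\tau)$ must actually be expanded and integrated by parts to produce $-\tfrac12\nabla^{*}\nabla h+\Riem(h,\cdot)+\div^{*}\div(h)$ with exactly those coefficients; the coupled linearised system for $(\dot f,\dot\tau)$ must be solved rather than asserted to yield $v_{h}$ and the trace term; and the prefactor $2/(\Lambda \vol(g))$ should emerge from the computation rather than be ``fixed by the normalisation'' after the fact. Two points deserve explicit attention if you carry this out: unique solvability of $\Lambda v_{h}-\Delta v_{h}=\div\div(h)$ holds because the Lichnerowicz--Obata bound $\lambda_{1}\geq\tfrac{n}{n-1}\Lambda>\Lambda$ keeps $\Lambda$ out of the spectrum of $\Delta$; and the smooth dependence of the minimiser $(f,\tau)$ on $g$, which the envelope and Schur-complement arguments require, rests on the non-degeneracy of $\partial^{2}_{\phi\phi}\mathcal{W}$ at the Einstein metric. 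Your proposed consistency check on conformal directions $h=\varphi g$ is a good one: it is exactly how the destabilisation criterion (\ref{CHIBound}) used elsewhere in the paper is extracted from the theorem.
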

We remark that the proof of this theorem was not given in \cite{CHI}. A more general second variation formula for the variation at a Ricci soliton was proved by Cao and Zhu in \cite{CZ}. We recall the splitting of $\Sym(TM^{\ast})$ into
$$\Sym(TM^{\ast}) = \ker(\div)_{0} \oplus \mathbb{R}g \oplus \textrm{im}(\div^{\ast}),$$
where $\ker(\div_{0})$ is the space of tensors that are divergence free and $L^{2}$-orthogonal to the metric $g$ (i.e. the integral of the trace vanishes). It is not hard to show that $N$ vanishes on $\mathbb{R}g \oplus \textrm{im}(\div^{\ast})$ and so we only consider perturbations in $\ker(\div)_{0}$. Restricted to this space one has
$$ Nh = -\frac{1}{2}\nabla^{\ast}\nabla+\Riem(h,\cdot) = -\frac{1}{2}(\Delta_{L}-2\Lambda)h$$
where 
$$\Delta_{L}h = \Delta h-2\Riem(h,\cdot)+\Ric \cdot h+ h \cdot \Ric$$
is the Lichnerowicz Laplacian. Hence an Einstein metric is linearly stable if $\Delta_{L} \geq  2\Lambda$. 
\subsection{Conformal perturbations}
A conformal perturbation is one of the form $h=ug$ for some $u\in C^{\infty}(M)$. However, it is actually convenient for us to consider a gauge equivalent perturbation. In \cite{CHI} the authors define the following operator
\begin{definition}[$S$-operator]
Let $(M,g)$ be an Einstein manifold and let $u\in C^{\infty}(M)$. Then we define $S(u) \in \Sym(TM^{\ast})$ by
$$S(u) =\left(\Lambda u-\Delta u \right)g-\nabla^{2}u.$$
\end{definition}
This operator has the following desirable property
\begin{lemma}
The tensor $S(u)$ is divergence free.
\end{lemma}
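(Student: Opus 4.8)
The plan is to compute $\div S(u)$ directly in index notation and exhibit a termwise cancellation that is forced by the Einstein condition. Throughout I use the conventions of the paper: the scalar Laplacian $\Delta=-\tr\nabla^2=-\nabla^i\nabla_i$ has non-negative spectrum, and for a symmetric $2$-tensor $T$ the divergence is $(\div T)_j=\nabla^iT_{ij}$. (The conclusion that $\div S(u)\equiv 0$ is of course insensitive to the overall sign of $\div$ or of $\Delta$.)

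First I would differentiate the two pieces of $S(u)_{ij}=(\Lambda u-\Delta u)g_{ij}-\nabla_i\nabla_j u$ separately. Because $g$ is parallel, the divergence of the first piece is simply the gradient of its coefficient, $\nabla^i\big[(\Lambda u-\Delta u)g_{ij}\big]=\Lambda\nabla_j u-\nabla_j\Delta u$. The divergence of the Hessian piece is $\nabla^i\nabla_i\nabla_j u$, i.e. the rough Laplacian applied to the exact one-form $du$. Collecting terms gives
$$\nabla^iS(u)_{ij}=\Lambda\nabla_j u-\nabla_j\Delta u-\nabla^i\nabla_i\nabla_j u.$$
Using $\Delta u=-\nabla^i\nabla_i u$, the middle term becomes $+\nabla_j(\nabla^i\nabla_i u)$, so the identity reduces to understanding the difference between $\nabla_j(\nabla^i\nabla_i u)$ and $\nabla^i\nabla_i\nabla_j u$.

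The key step is the Bochner (Ricci) commutation identity. Writing $H_{ik}=\nabla_i\nabla_k u$ for the (symmetric) Hessian, one commutes the outermost two covariant derivatives, $[\nabla_j,\nabla_i]\nabla_k u=-R_{jik}{}^{m}\nabla_m u$, and contracts with $g^{ik}$; the contraction $g^{ik}R_{jik}{}^{m}$ is exactly the Ricci tensor $R_j{}^{m}$, yielding
$$\nabla^i\nabla_i\nabla_j u=\nabla_j(\nabla^i\nabla_i u)+R_{jk}\nabla^k u.$$
Substituting this into the previous display, the two $\nabla_j(\nabla^i\nabla_i u)$ terms cancel and we are left with $\nabla^iS(u)_{ij}=\Lambda\nabla_j u-R_{jk}\nabla^k u$. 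Invoking $\Ric(g)=\Lambda g$, so that $R_{jk}\nabla^k u=\Lambda\nabla_j u$, the curvature term cancels precisely against the contribution of the Einstein constant, and $\div S(u)=0$.

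The main obstacle here is not conceptual but bookkeeping: one must keep the three sign conventions in play mutually consistent — the non-negative Laplacian $\Delta=-\nabla^i\nabla_i$, the divergence $(\div T)_j=\nabla^iT_{ij}$, and the commutator convention defining $R_{jik}{}^{m}$ — and verify that the Ricci contraction emerges with the coefficient $+R_{jk}\nabla^k u$ that exactly cancels the $\Lambda\nabla_j u$ term. The entire geometric content sits in the Einstein hypothesis: without it the computation would instead give $\nabla^iS(u)_{ij}=(\Lambda g_{jk}-R_{jk})\nabla^k u$, which is nonzero in general, so the lemma is genuinely a statement about Einstein (not merely Riemannian) manifolds.
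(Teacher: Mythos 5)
Your proof is correct and follows essentially the same route as the paper: compute the divergence of the trace part via $\div(\phi g)=d\phi$, handle the Hessian via the Bochner identity $\div(\nabla^2 u)=-d\Delta u+\Ric(du)$, and cancel the remaining terms using $\Ric=\Lambda g$. The only difference is that you derive the Bochner identity from the Ricci commutation relation where the paper simply quotes it.
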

\begin{proof}
This follows from the identity $\div(\phi g) = d\phi$ and the Bochner formula
$$ \div(\nabla^{2} \phi) = -d\Delta \phi + \Ric(d\phi)$$
for a smooth function $\phi$.
\end{proof}
Without loss of generality we can assume that $S(u) \in \ker(\div)_{0}$ by adding a constant to $u$ if necessary.
\begin{theorem}[Cao Hamilton-Ilmanen]
The operator $S$ satisfies the identity
$$\Delta_L(S(u)) = S(\Delta(u)).$$
Hence any eigenfunction of $\Delta |(\ker(S))^{\perp}$ gives an eigentensor of $\Delta_{L}$ with the same eigenvalue.
\end{theorem}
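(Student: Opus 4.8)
The plan is to prove the intertwining identity $\Delta_L(S(u)) = S(\Delta u)$ directly and by linearity, handling the pure-trace part $(\Lambda u - \Delta u)g$ and the Hessian part $\nabla^{2}u$ of $S(u)$ separately, and then to deduce the statement about eigenfunctions as a formal consequence of the intertwining.

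First I would dispose of the conformal (pure-trace) piece. Since $g$ is parallel, for any smooth function $\phi$ the rough Laplacian satisfies $\Delta(\phi g) = (\Delta\phi)g$, and the algebraic curvature terms of $\Delta_L$ are pointwise linear in their argument, so they act on $\phi g$ as $\phi$ times their action on $g$. Using $\Ric = \Lambda g$ one checks $\Riem(g,\cdot)_{ij} = R_{iajb}g^{ab} = R_{ij} = \Lambda g_{ij}$, whence $\Delta_L g = \Delta g - 2\Riem(g,\cdot) + \Ric\cdot g + g\cdot\Ric = -2\Lambda g + 2\Lambda g = 0$. It follows that $\Delta_L(\phi g) = (\Delta\phi)g$; taking $\phi = \Lambda u - \Delta u$ gives $\Delta_L\bigl((\Lambda u - \Delta u)g\bigr) = (\Lambda\Delta u - \Delta^{2}u)g$, which is exactly the pure-trace part of $S(\Delta u)$.

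The heart of the matter is the claim that on an Einstein manifold $\Delta_L$ commutes with the Hessian of a function, i.e. $\Delta_L(\nabla^{2}u) = \nabla^{2}(\Delta u)$. To establish this I would commute the rough Laplacian past the two covariant derivatives in $\nabla_i\nabla_j u$ using the Ricci identities, and simplify with the second Bianchi identity. Because $g$ is Einstein we have $\nabla\Ric = 0$, so every term involving derivatives of curvature drops out, and $\Ric = \Lambda g$ collapses the remaining Ricci contractions to multiples of $\Lambda$. In the sign conventions of the paper this yields the commutation formula $\Delta(\nabla^{2}u) = \nabla^{2}(\Delta u) + 2\Riem(\nabla^{2}u,\cdot) - 2\Lambda\nabla^{2}u$. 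Writing $h = \nabla^{2}u$ and substituting into $\Delta_L h = \Delta h - 2\Riem(h,\cdot) + \Ric\cdot h + h\cdot\Ric = \Delta h - 2\Riem(h,\cdot) + 2\Lambda h$, the two curvature corrections $+2\Riem(\nabla^{2}u,\cdot) - 2\Lambda\nabla^{2}u$ produced by the commutation cancel exactly against the algebraic terms $-2\Riem(h,\cdot) + 2\Lambda h$ of $\Delta_L$, leaving $\Delta_L(\nabla^{2}u) = \nabla^{2}(\Delta u)$.

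Assembling the two pieces by linearity gives $\Delta_L(S(u)) = (\Lambda\Delta u - \Delta^{2}u)g - \nabla^{2}(\Delta u) = S(\Delta u)$. The eigenfunction statement is then immediate: if $\Delta u = \mu u$ with $u \in (\ker S)^{\perp}$, so that $S(u) \neq 0$, linearity of $S$ and the intertwining give $\Delta_L(S(u)) = S(\Delta u) = \mu\, S(u)$, exhibiting $S(u)$ as a nonzero eigentensor of $\Delta_L$ with the same eigenvalue $\mu$. The main obstacle is the Hessian commutation identity: the delicate point is the bookkeeping of the curvature terms and sign conventions in the Ricci-identity computation, which must be carried out carefully so that the corrections match the algebraic terms of $\Delta_L$ and cancel; once that is done, the rest of the argument is purely formal.
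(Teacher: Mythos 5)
Your proposal is correct and follows essentially the same route as the paper: expand $\Delta_L(S(u))$ by linearity, use $\Riem(\phi g,\cdot)=\phi\,\Ric=\Lambda\phi g$ to kill the pure-trace curvature terms, and invoke the commutator of the rough Laplacian with the Hessian (Ricci identities plus the second Bianchi identity, with the Einstein condition annihilating the $\nabla\Ric$ terms) so that the resulting curvature corrections cancel the algebraic terms of $\Delta_L$. Your organization into the two clean sub-identities $\Delta_L(\phi g)=(\Delta\phi)g$ and $\Delta_L(\nabla^2 u)=\nabla^2(\Delta u)$ is, if anything, tidier than the paper's expansion, but the underlying computation is identical.
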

\begin{proof}  Expanding out the left-hand side, we get 
$$
\Delta(\Delta(u)g) + 2 Rm((\Delta u)g, \cdot) - Ric .(\Delta u)g - (\Delta u)g.Ric 
$$
$$
- \Delta(D^2(u)) - 2 Rm(D^2(u, \cdot) - Ric .D^2u - D^2u.Ric 
$$
$$
+ \frac{(\Delta u)g}{2\tau} + 2 Rm((u)g, \cdot) - Ric .(u)g - (u)g.Ric \\
$$

Then we get cancellations as  $Rm((\Delta u)g, \cdot) = Ric .(\Delta u)g$, and similarly in the third line.

For the first term of the second line, we use the equation relating the commutator of the Laplacian and the gradient of a function: 
\begin{align*}
\Delta(D^2(u))= & D^2(\Delta(u)) + (R_{jp}g_{ik} + R_{ip}g_{jk} - 2R_{kipj})\nabla^k\nabla^p u \\
&+ (\nabla_iR_{jp} + \nabla_jR_{pi} - \nabla_pR_{ij})\nabla^p u. 
\end{align*}

Clearly the term in front of $\nabla^p u$ vanishes. The term in front of $\nabla^k\nabla^p u$ is $$-2Rm(D^2(u, \cdot) - Ric.D^2u - D^2u.Ric.$$
Hence the left-hand side of the first equation becomes 
$$
\Delta(\Delta(u)g) - D^2(\Delta(u)) + \frac{(\Delta u)g}{2\tau}
$$
and we are done. 
\end{proof}
We note that for any Einstein manifold apart form the round sphere, \mbox{$\ker(S) = \{0\}$}. Hence the Page metric is destabilised by $S(u_{1})$ where $u_{1}$ is an eigenfunction associated to $\lambda_{1}^{Page}$. 
\section{Numerical Results}\label{NumRes}
\subsection{The Page metric}
In this section we report on some work that examines numerically the spectrum of the Page metric. We begin by considering the cohomogeneity one description.  The principal orbits for the cohomogeneity one action  by $U(2)$ on $\mathbb{CP}^{2}\sharp\overline{\mathbb{CP}}^{2}$ are $\mathbb{S}^{3}$ and they form a dense subset diffeomorphic to $I\times \mathbb{S}^{3}$ for an interval $I$. Metrics for which the $U(2)$ action is isometric can be written in the form: 
$$g = dt^{2}+f^{2}(t)\sigma^{2}_{X}+h^{2}(t)(\sigma_{Y}^{2}+\sigma_{Z}^{2})$$
where $f$ and $h$ are smooth functions and  $\sigma_{X}, \sigma_{Y}, \sigma_{Z}$ are the one-forms dual to the usual generators of $\mathfrak{su}(2)$. The Einstein equation becomes a non-linear system of ODEs which one can solve explicitly (see \cite{Bes} for example). In fact we use a Runge-Kutta (RK4) integrator to numerically generate $f$ and $h$ but as the explicit formulae for $f$ and $h$ involve evaluating an integral we can get the same precision using this method. We take $\Lambda=1/2$ which corresponds to initial conditions 
$$(f(0),\dot{f}(0),h(0),\dot{h}(0)) = (0,1,2.62,0)$$
and we take a step size of $0.0001$ in the RK4 integrator. The interval $I=(0,4.6145)$ in this case.
We take the set 
$$T_{N} = \{1,t,t^{2},...,t^{N}\}$$
where $t$ is the coordinate on the interval $I =(0,4.6145)$.  We then calculate the matrices $A$ and $B$ where
$$A_{ij} = \langle\nabla t^{i-1}, \nabla t^{j-1}\rangle_{L^{2}} \textrm{ and } B_{ij}=\langle t^{i-1},t^{j-1}\rangle_{L^{2}}.$$
 Table 1 records the values of the normalised, non-negative eigenvalues of $B^{-1}A$ for various values of $N$.\\
\begin{table}[!h]
\begin{center}
\caption{Eigenvalues of $B^{-1}A$ using $T_{N}=\{1,t,...,t^{N}\}$}
\begin{tabular}{|c|c|}
\hline
\textbf{N} & \textbf{Non-negative eigenvalues of $B^{-1}A$}\\
\hline
1 & 2.0076\\
\hline
2 & 2.0076, 6.6356\\
\hline
3 & 1.8833, 6.6356, 15.178\\
\hline
4 & 1.8833, 5.5941, 15.178, 29.426 \\ 
\hline
5 & 1.8831, 5.5941, 11.269,  29.426, 51.587\\ 
\hline
\end{tabular}
\end{center}
\end{table}
\\
We also consider the Rayleigh-Ritz method using the results of Proposition \ref{Sprop}.  We take the the set
$$ T_{N} = \{1, s_{k}^{-1},...,s_{k}^{-N}\}$$
and we compute the matrices
$$A_{ij} = \langle\nabla s_{k}^{1-i}, \nabla s_{k}^{1-j}\rangle_{L^{2}} \textrm{ and } B_{ij}=\langle s_{k}^{1-i},s_{k}^{1-j}\rangle_{L^{2}}.$$
Table 2 records the normalised non-negative eigenvalues of the the matrix $B^{-1}A$ for various values of $N$.\\
\begin{table}[!h]
\begin{center}
\caption{Eigenvalues of $B^{-1}A$ using $T_{N}=\{1,s_{k}^{-1},...,s_{k}^{-N}\}$}
\begin{tabular}{|c|c|}
\hline
$\textbf{N}$ &\textbf{Non-negative eigenvalues of $B^{-1}A$} \\
\hline 
1 & $1.8830$\\
\hline
2 & $1.8830$, $5.5789$ \\
\hline
3 & $1.8830$, $5.5789$, $11.134$\\ 
\hline
4 & $1.8830$, $5.5787$, $11.131$, $24.484$\\
\hline
5 & $1.8830$, $5.5787$, $11.112$, $18.094$\\
\hline
\end{tabular}
\end{center}
\end{table}
\\
The numerical investigation seems to suggest that it would be reasonable to conclude that the $U(2)$-invariant spectrum of the Page metric begins:  
$$0, \ 1.9\Lambda, \ 5.6\Lambda, \ 11\Lambda,...$$
where the factors are taken to 2 significant figures.
\subsection{The Chen-LeBrun-Weber metric}
We again consider the Rayleigh-Ritz method using the results of Proposition \ref{Sprop}.  We take the the set
$$ T_{N} = \{1, s_{k}^{-1},...,s_{k}^{-N}\}.$$
Table 3 records the normalised non-negative eigenvalues of the the matrix $B^{-1}A$ for various values of $N$.\\
\begin{table}[!h]
\begin{center}
\caption{Eigenvalues of $B^{-1}A$ using $T_{N}=\{1,s_{k}^{-1},...,s_{k}^{-N}\}$}
\begin{tabular}{|c|c|}
\hline
$\textbf{N}$ &\textbf{Non-negative eigenvalues of $B^{-1}A$} \\
\hline 
1 & $2.1043$\\
\hline
2 & $2.0967$, $5.3423$ \\
\hline
3 & $2.0967$, $5.3363$, $8.3081$\\ 
\hline
4 & $2.0969$, $5.3746$, $10.231$\\
\hline
5 & $2.0965$, $5.3742$, $10.209$\\
\hline
\end{tabular}
\end{center}
\end{table}\\
Table 3 gives strong evidence that our bound is very close to being optimal, at least for the $\mathbb{T}^{2} \times \mathbb{Z}_{2}$ invariant spectrum of the CLW metric.  We remark that our method seems to give the next non-zero eigenvalue of the CLW metric as close to $5.37\Lambda$. It would be intriguing to use the Bunch-Donaldson approximation to the CLW metric to numerically investigate the spectrum and see if the bound $2.11\Lambda$ is also close to optimal. One could also investigate whether there are other eigenvalues apart from those in the  $\mathbb{T}^{2} \times \mathbb{Z}_{2}$-invariant spectrum. We leave this as a project for the future.

\end{document}